\newcommand{\Q}{\mathbb{Q}}
\newcommand{\N}{\mathbb{N}}
\newcommand{\Z}{\mathbb{Z}}
\newcommand{\C}{\mathbb{C}}
\newcommand{\J}{\mathcal{J}}
\renewcommand{\a}{\mathfrak{a}}
\renewcommand{\O}{\mathcal{O}}
\newcommand{\of}{\circ}     % composition of functions
\DeclareMathOperator{\ord}{{ord}}
\DeclareMathOperator{\Supp}{{Supp}}
\DeclareMathOperator{\Spec}{{Spec}}
\DeclareMathOperator{\Ex}{Ex}
\DeclareMathOperator{\Div}{Div}
\newcommand{\cE}{\check{E}}
\theoremstyle{plain}
\newtheorem*{theorem*}{Theorem}
\newtheorem{theorem}{Theorem}[section]
\newtheorem{lemma}[theorem]{Lemma}
\theoremstyle{remark}
\theoremstyle{definition}
\newtheorem*{question*}{Question}
\newtheorem*{example*}{Example}
\begin{document}
\bibliographystyle{amsalpha}

\title{Integrally Closed Ideals on Log Terminal Surfaces are Multiplier Ideals}
\author{Kevin Tucker}
\address{Department of Mathematics, University of Michigan, Ann Arbor, Michigan  48109}
\email{kevtuck@umich.edu}
\thanks{The author was partially supported by the NSF under grant DMS-0502170.}

\begin{abstract}
We show that all integrally closed ideals on log terminal surfaces are multiplier ideals by extending an existing proof for smooth surfaces.
\end{abstract}

\maketitle
\tableofcontents

\spacing{1.3}

\section{Introduction}
Throughout this article, we will consider a scheme $X = \Spec \O_{X}$, where $\O_{X}$ is a two-dimensional local normal domain essentially of finite type over $\C$.  
Our purpose is to address the following question, raised in \cite{LLS08}:

\begin{question*}
If $X$ has a rational singularity, is every integrally closed ideal a multiplier ideal?
\end{question*}

\noindent
When $X$ is regular, an affirmative answer was given concurrently by \cite{LipWat03} and \cite{FavreJonsson05}.  Our main result is to generalize their methods to prove the following:

\begin{theorem}
\label{theorem}
Suppose $X$ has log terminal singularities.  Then every integrally closed ideal is a multiplier ideal.
\end{theorem}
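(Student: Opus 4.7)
The strategy is to adapt the smooth-surface argument of Lipman-Watanabe and Favre-Jonsson, exploiting two features of the log terminal setting: log terminal surface singularities are rational, so Lipman's correspondence between integrally closed ideals and antinef Cartier divisors on a resolution applies, and $K_{X}$ is $\Q$-Cartier, so the relative canonical $\Q$-divisor $K_{Y/X} = K_{Y} - \pi^{*}K_{X}$ is defined on any resolution $\pi \colon Y \to X$ with every exceptional coefficient $a_{i} > -1$ (the meaning of log terminal).

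\textbf{Setup and reduction.} Fix a log resolution $\pi \colon Y \to X$ of $\a$. By Lipman, $\a \cdot \O_{Y} = \O_{Y}(-F)$ for an effective antinef Cartier divisor $F$ on $Y$, with $\a = \pi_{*}\O_{Y}(-F)$. It is enough to produce an effective antinef integer Cartier divisor $G$ on $Y$ and a rational $c > 0$ satisfying
\[
\lfloor cG - K_{Y/X} \rfloor \;=\; F,
\]
for then the integrally closed ideal $\mathfrak{b} := \pi_{*}\O_{Y}(-G)$ has $\mathfrak{b} \cdot \O_{Y} = \O_{Y}(-G)$, $\pi$ is a log resolution of $\mathfrak{b}$, and
\[
\J(\mathfrak{b}^{c}) \;=\; \pi_{*}\O_{Y}\bigl(\lceil K_{Y/X} - cG \rceil\bigr) \;=\; \pi_{*}\O_{Y}(-F) \;=\; \a.
\]

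\textbf{Construction and main obstacle.} As in the smooth case, I would take $c$ rational and slightly less than $1$. The floor condition then forces the integer coefficient $g_{i}$ of $G$ at each $E_{i}$ to lie in an interval $[(f_{i} + a_{i})/c,\, (f_{i} + a_{i} + 1)/c)$ of length $1/c > 1$, which always contains an integer. Starting with the leftmost valid integer in each interval, I would apply Lipman's antinef closure operator to get a genuinely antinef $G$, and the main technical step is to check that this closure does not shift any coefficient past the right endpoint of its interval. In the smooth case this uses the negative-definiteness of the exceptional intersection matrix together with the integrality and effectivity of $K_{Y/X}$; here one must redo the estimate with a $\Q$-divisor $K_{Y/X}$ whose coefficients can be negative. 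The log terminal bound $a_{i} > -1$ is exactly what is needed to force $c$ close enough to $1$ that the antinef closure stays within range, so I expect the heart of the proof to be a careful reworking of the Favre-Jonsson estimates with this $\Q$-coefficient data in place; the non-exceptional components of $F$ are handled trivially, since there $a_{i} = 0$ and one simply sets $g_{i} = f_{i}$.
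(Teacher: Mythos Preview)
Your outline names the right target but misses the construction that makes the argument close, already in the smooth case. One cannot in general stay on the given log resolution $Y$: if $Y\to X=\mathbb{A}^{2}$ is the blowup of the origin followed by blowups at five distinct smooth points of the exceptional curve, so that the central curve $E_{1}$ has $E_{1}^{2}=-6$, and $F=5E_{1}+6\sum_{i\ge 2}E_{i}$ (an antinef divisor for which this $Y$ is the minimal log resolution), then a short computation shows there is \emph{no} antinef integral $G$ on $Y$ with $\lfloor cG-K_{Y/X}\rfloor=F$ for any $c$ near $1$ on either side: the inequalities $g_{i}\ge g_{1}$ and $\sum_{i\ge 2}g_{i}\le 6g_{1}$ force $g_{1}$ out of its allowed interval. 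So the step ``antinef closure stays in range'' genuinely fails, and this is not a matter of sharper estimates. (Incidentally, with $c<1$ your prescription $g_{i}=f_{i}$ on non-exceptional components gives $\lfloor cf_{i}\rfloor<f_{i}$ whenever $f_{i}\ge 1$.)

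The actual proof, both in Lipman--Watanabe and here, does not attempt to hit $\lfloor cG-K\rfloor=F$ on the nose. Instead one performs, over $e_{i}=-F\cdot E_{i}$ chosen points of each exceptional $E_{i}$, a generic sequence of $n_{i}$ further point blowups $g\colon Z\to Y$, with $n_{i}$ determined by the discrepancy $b_{i}$ and a small $\epsilon>0$; on $Z$ the divisor $F+K_{g}$ is automatically $h$-antinef, and $G$ is taken to be a large multiple of an ample perturbation of it, with $\lambda=(1+\epsilon)/N$. By design $\lfloor\lambda G-K_{h}\rfloor\le F$, with equality along the \emph{last} new exceptional divisor over each blown-up point, and the remaining work is to show that the antinef closure of $\lfloor\lambda G-K_{h}\rfloor$ is exactly $F$. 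In the smooth case that last step uses Zariski's unique factorization of complete ideals; on a log terminal singularity unique factorization fails, and the paper replaces it with the relative numerical decomposition $D=h^{*}h_{*}D+\sum_{E}(-D\cdot E)\,\check E$ in the dual basis $\check E_{i}$ of the exceptional lattice. Neither the extra blowups nor this numerical substitute for factorization appears in your plan, and without them the argument does not close.
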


Recall that log terminal singularities are necessarily rational (see Theorem 5.22 in \cite{KM}).  In a sense, this gives a complete answer to the above question: if $X$ has a rational singularity but fails to be log terminal, every multiplier ideal is strictly proper.  In particular, $O_{X}$ itself is not a multiplier ideal.  

\smallskip

There are several difficulties in trying to extend the techniques used in  \cite{LipWat03}.  One must show that successful choices can be made in the construction (specifically, the choice of $\epsilon$ and $N$ in Lemma 2.2 of \cite{LipWat03}).  Here, it is essential that $X$ has log terminal singularities.
Further problems arise from the failure of unique factorization to hold for integrally closed ideals.  As $X$ is not necessarily factorial, we may no longer reduce to the finite colength case.  In addition, the crucial contradiction argument which concludes the proof in \cite{LipWat03} does not apply.  These nontrivial difficulties are overcome by using a relative numerical decomposition for divisors on a resolution over $X$.  

Our presentation is self-contained and elementary.  Section 2 contains background material covering the relative numerical decomposition, antinef closures, and some computations using generic sequences of blowups.  Section 3 is dedicated to the constructions and arguments in the proof of Theorem~\ref{theorem}.

\section{Background}

\subsection{Relative Numerical Decomposition}  Consider $X = \Spec \O_{X}$, where $\O_{X}$ is a two-dimensional local normal domain essentially of finite type over $\C$. Let $x \in X$ be the unique closed point,
and suppose $f: Y \to X$ is a projective birational morphism such that $Y$ is regular and $f^{-1}(x)$ is a simple normal crossing divisor.  Let $E_{1}, \ldots, E_{u}$ be the irreducible components of $f^{-1}(x)$, and $\Lambda = \oplus_{i} \Z E_{i} \subset \Div(Y)$ the lattice they generate.

\smallskip

The intersection pairing $\Div(Y) \times \Lambda \to \Z$ induces a negative definite $\Q$-bilinear form on $\Lambda_{\Q}$ (see \cite{Artin66} for an elementary proof). Consequently, there is a dual basis $\cE_{1}, \ldots, \cE_{u}$ for $\Lambda_{\Q}$ defined by the property that
\[
\cE_{i} \cdot E_{j} = - \delta_{ij} = \left\{ 
\begin{array}{c@{\quad}l}
-1 & i = j \\
0 & i \neq j
\end{array}
\right..
\]
Recall that a divisor $D \in \Div_{\Q}(Y)$ is said to be $f$-antinef if $D \cdot E_{i} \leq 0$ for all $i = 1, \ldots, u$.  In this case, $D$ is effective if and only if $f_{*} D$ is effective (see Lemma 3.39 in \cite{KM}). In particular, $\cE_{1}, \ldots, \cE_{u}$ are effective.

\smallskip

If $C \in \Div_{\Q}(X)$, we define the numerical pullback of $C$ to be the unique $\Q$-divisor $f^{*}C$ on $Y$ such that $f_{*}f^{*} C = C$ and $f^{*}C \cdot E_{i} = 0$ for all $i =  1, \ldots, u$.  Note that, when $C$ is Cartier or even $\Q$-Cartier, this agrees with the standard pullback of $C$.  If $D \in \Div_{\Q}(Y)$, we have
\begin{equation}
\label{decomp}
D = f^{*}f_{*}D + \sum_{i} (-D \cdot E_{i}) \cE_{i}.
\end{equation}
We shall refer to this as a relative numerical decomposition for $D$.
Note that, even when $D$ is integral, both $f^{*}f_{*} D$ and $\cE_{1}, \ldots, \cE_{u}$ are likely non-integral.  The fact that $f^{*}f_{*} D$ and $\cE_{1}, \ldots, \cE_{u}$ are always integral divisors when $X$ is smooth and $D$ is integral is equivalent to the unique factorization of integrally closed ideals.  See \cite{Lipman69} for further discussion.

\subsection{Antinef Closures and Global Sections}  Suppose now that $D' = \sum_{E} a'_{E}E$ and
$D'' = \sum_{E} a''_{E} E$ are $f$-antinef divisors, where the sums range over the prime divisors 
$E$ on $Y$.  It is easy to check that $D' \wedge D'' = \sum_{E} \min\{a'_{E}, a''_{E}\} E$ is also $f$-antinef.  Further, any integral $D \in \Div(Y)$ is dominated by some integral $f$-antinef divisor (e.g. $(f^{-1})_{*} f_{*} D + M(\cE_{1} + \cdots + \cE_{u})$ for sufficiently large and divisible $M$).  In particular, there is a unique smallest integral $f$-antinef divisor $D^{\sim}$, called the $f$-antinef closure of $D$, such that $D^{\sim} \geq D$. One can verify that $f_{*}D = f_{*}D^{\sim}$, and in addition the following important lemma holds (see Lemma 1.2 of \cite{LipWat03}).  The proof also gives an effective algorithm for computing $f$-antinef closures.

\begin{lemma}
\label{sections}
For any $D \in \Div(Y)$, we have $f_{*} \O_{Y}(-D) = f_{*} \O_{Y}( - D^{\sim})$.
\end{lemma}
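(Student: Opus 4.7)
The plan is to establish the equality by induction on the iterative algorithm that produces $D^\sim$ from $D$. Since $D^\sim \geq D$, the inclusion $f_*\O_Y(-D^\sim) \subseteq f_*\O_Y(-D)$ is automatic, so the content is the reverse inclusion.

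The key one-step claim is: if $D \cdot E_i > 0$ for some exceptional component $E_i$, then
\[
f_* \O_Y(-D) = f_* \O_Y(-D - E_i).
\]
To prove this, I would apply $f_*$ to the short exact sequence
\[
0 \to \O_Y(-D - E_i) \to \O_Y(-D) \to \O_{E_i}(-D|_{E_i}) \to 0
\]
and argue that $f_* \O_{E_i}(-D|_{E_i}) = 0$. Since $f^{-1}(x)$ has simple normal crossings, $E_i$ is a smooth projective curve over $\C$, and the restricted line bundle has degree $-D \cdot E_i < 0$. A line bundle of negative degree on a smooth projective curve has no nonzero global sections, giving the vanishing and hence the equality of pushforwards from the left-exactness of $f_*$.

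Next I would set up the iteration: starting from $D_0 = D$, as long as $D_j$ fails to be $f$-antinef, choose some $E_i$ with $D_j \cdot E_i > 0$ and set $D_{j+1} = D_j + E_i$. I would need to verify two things to run the induction. First, that this process terminates and produces $D^\sim$: for termination, I would show inductively that $D_j \leq D^\sim$ at every stage; if $D_j \cdot E_i > 0$ while the $E_i$-coefficient of $D_j$ already equalled that of $D^\sim$, then $D^\sim - D_j$ would be effective with no $E_i$ component yet would satisfy $(D^\sim - D_j) \cdot E_i < 0$, contradicting that intersections of distinct prime components are non-negative. Since there are only finitely many integral divisors between $D$ and $D^\sim$, the process stops, and the terminal divisor is $f$-antinef and dominates $D$, hence equals $D^\sim$ by minimality. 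Second, applying the one-step claim at each stage yields $f_* \O_Y(-D) = f_* \O_Y(-D_1) = \cdots = f_* \O_Y(-D^\sim)$.

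The main obstacle is really the clean verification that the iterative procedure terminates and lands on $D^\sim$; the cohomological step is essentially a one-line negative-degree argument. Once termination and monotonicity with respect to $D^\sim$ are in hand, the induction is immediate and the lemma follows.
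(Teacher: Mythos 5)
Your proposal is correct and follows essentially the same approach as the paper: both use the short exact sequence $0 \to \O_Y(-D-E_i) \to \O_Y(-D) \to \O_{E_i}(-D) \to 0$ with the negative-degree vanishing on $E_i$, combined with an inductive iteration $D \mapsto D + E_i$ that stays below $D^\sim$ and terminates at $D^\sim$. The paper phrases the induction by counting down the total coefficient sum $s_D$ of $D^\sim - D$, while you argue termination from finiteness of integral divisors between $D$ and $D^\sim$, but the monotonicity verification (via $(D^\sim - D_j)\cdot E_i$) and the key cohomological step are identical.
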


\begin{proof}
Let $s_{D} \in \N$ be the sum of the coefficients of $D^{\sim} - D$ when written in terms of $E_{1}, \ldots, E_{n}$.  If $s_{D} = 0$, then $D = D^{\sim}$ is $f$-antinef and the statement follows trivially.  Else, there is an index $i$ such that $D \cdot E_{i} > 0$.  As $E_{i} \cdot E_{j} \geq 0$ for $j \neq i$, we must have
\[
D \leq D + E_{i} \leq D^{\sim} = (D + E_{i})^{\sim}.
\]
Thus, $s_{D + E_{i}} = s_{D} -  1$ and by induction we may assume
\[
f_{*} \O_{Y}(-(D+E_{i}) = f_{*} \O_{Y}(-(D+E_{i})^{\sim}) = f_{*} \O_{Y}(-D^{\sim}) 
\]
and it is enough to show $f_{*} \O_{Y}(-D) = f_{*} \O_{Y}(-(D+ E_{i})$.  Consider the exact sequence
\[  \xymatrix{ 
0 \ar[r] &   \O_{Y}(-(D+E_{i}))   \ar[r] &   \O_{Y}(-D)   \ar[r] &   \O_{E_{i}}(-D)   \ar[r] & 0   .
} \]
Since $\deg(\O_{E_{i}}(-D)) = - D \cdot E_{i} < 0$, we have $f_{*} \O_{E_{i}}(-D) = 0$; applying $f_{*}$ yields the desired result.
\end{proof}

\subsection{Generic Sequences of Blowups}
\label{blowups}
In the proof of Theorem \ref{theorem}, we will make use of the following auxiliary construction.
Suppose $x^{(i)}$ is a closed point of $E_{i}$ with $x^{(i)} \not\in E_{j}$ for $j \neq i$.  A generic sequence of $n$-blowups over $x^{(i)}$ is:
\[  \xymatrix{ 
Y = Y_{0} & Y_{1} \ar[l]_{\quad \sigma_{1}}  &   \ar[l]_{\sigma_{2}} \cdots &  \ar[l]_{\sigma_{n-1}} Y_{n-1} & Y_{n} \ar[l]_{\quad \sigma_{n}}
} \]
where
$\sigma_{1} : Y_{1} \to Y_{0}$ is the blowup of $Y_{0}= Y$ at $x_{1} := x^{(i)}$, and  
$\sigma_{k}: Y_{k} \to Y_{k-1}$ is the blowup of $Y_{k-1}$ at a generic closed point $x_{k}$ of $(\sigma_{k-1})^{-1}( x_{k-1} )$ for $k = 2, \ldots, n$.  Let $\sigma: Y_{n} \to Y$ be the composition $\sigma_{n} \of \cdots \of \sigma_{1}$.  We will denote by $E(1), \ldots, E(u)$ the strict transforms of $E_{1}, \ldots, E_{u}$ on $Y_{n}$.  Also, let $E(i, x^{(i)}, k)$, $k = 1, \ldots, n$, be the strict transforms of the $n$ new $\sigma$-exceptional divisors created by the blowups $\sigma_{1}, \ldots, \sigma_{n}$, respectively.

\begin{lemma}
\label{gen}
\begin{enumerate}[(a.)]
\item
Let $\sigma: Y_{n} \to Y$ be a generic sequence of blowups over $x^{(i)} \in E_{i}$.  Then one has
\[
\cE(i) \leq \cE(i,x^{(i)},1) \leq \cdots \leq \cE(i,x^{(i)},n).
\]
\item
\label{genb}
Suppose $D \in \Div(Y)$ is an integral $(f \of \sigma)$-antinef divisor such that $E_{i}$ is the unique component of $\sigma_{*}D$ containing $x^{(i)}$.
If $\ord_{E(i)}D = a_{0}$ and $\ord_{E(i, x^{(i)}, k)}D = a_{k}$ for $k = 1, \ldots, n$, then
\[
a_{0} \leq a_{1} \leq \cdots \leq a_{n}.
\]
Further, $a_{0} < a_{n}$ if and only if
\[
\left( \sum_{k=1}^{n} (-D \cdot E(i, x^{(i)}, k)) \cE(i, x^{(i)}, k)  \right) \geq \cE(i).
\]
\end{enumerate}
\end{lemma}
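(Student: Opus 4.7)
To streamline notation, write $E'_k := E(i,x^{(i)},k)$, $\cE'_k := \cE(i,x^{(i)},k)$, and $g := f \of \sigma$. My strategy is to exploit the chain structure of the dual graph of $\sigma^{-1}(x^{(i)})$ on $Y_n$: the genericity of each $x_k$ (a generic closed point of $\sigma_{k-1}^{-1}(x_{k-1})$) forces, by an easy induction, the $g$-exceptional divisors over $x^{(i)}$ to form a chain. Explicitly, $E(i)$ meets $E'_1$ transversally in one point, each $E'_k$ with $1 \leq k < n$ meets only $E'_{k-1}$ and $E'_{k+1}$ (each in one point, with the convention $E'_0 := E(i)$), the self-intersections are $(E'_k)^2 = -2$ for $k < n$ and $(E'_n)^2 = -1$, and no other $g$-exceptional divisor meets any $E'_k$.

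For (a), I would induct on $k$, tracking how the dual basis elements transform under $\sigma_k$. The key identity: when $\sigma_k:Y_k\to Y_{k-1}$ blows up a point on $E'_{k-1}$ not lying on any other exceptional divisor, every dual basis element on $Y_{k-1}$ pulls back to its counterpart on $Y_k$, while the new dual corresponding to $E'_k$ on $Y_k$ equals $\sigma_k^*\cE(E'_{k-1}) + E'_k$. Both assertions are verified by intersecting against each exceptional divisor on $Y_k$ and applying the projection formula (using that $(\sigma_k)_* E'_k = 0$). Iterating the pullback up to $Y_n$, the difference $\cE'_k - \cE'_{k-1}$ equals the total transform of $E'_k$ to $Y_n$ --- namely $E'_k + E'_{k+1} + \cdots + E'_n$ --- which is effective. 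Telescoping these differences gives (a).

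For the first claim of (b), the hypothesis on $\sigma_*D$ implies that any prime $P$ in the support of $D$ other than $E(i), E'_1, \ldots, E'_n$ satisfies $P \cdot E'_k = 0$ for every $k$: either $P = E(j)$ with $j \neq i$, so $P \cap \sigma^{-1}(x^{(i)}) = \emptyset$ by the assumption $x^{(i)} \notin E_j$; or $P$ is non-$\sigma$-exceptional with $\sigma_*P$ a component of $\sigma_*D$ not containing $x^{(i)}$, so again $P \cap \sigma^{-1}(x^{(i)}) = \emptyset$. Combined with the chain intersection data above, this collapses the intersection numbers to
\[
D \cdot E'_1 = a_0 - 2a_1 + a_2, \quad D \cdot E'_k = a_{k-1} - 2a_k + a_{k+1} \text{ for } 1 < k < n, \quad D \cdot E'_n = a_{n-1} - a_n.
\]
Setting $d_k := a_k - a_{k-1}$ and $c_k := -D \cdot E'_k$, the antinef condition $c_k \geq 0$ translates to $d_{k+1} \leq d_k$ for $k < n$ together with $d_n \geq 0$, forcing $d_1 \geq \cdots \geq d_n \geq 0$ and thus $a_0 \leq a_1 \leq \cdots \leq a_n$.

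For the final equivalence, the telescoping identity $\sum_{k=1}^n c_k = d_1 = a_1 - a_0$ combined with the monotonicity of the $d_k$'s shows that $a_0 < a_n$ holds precisely when $d_1 \geq 1$, i.e.\ $\sum_k c_k \geq 1$. In that case, part (a) and the effectivity of $\cE(i)$ yield
\[
\sum_{k=1}^n c_k \cE'_k \geq \Bigl(\sum_{k=1}^n c_k\Bigr) \cE(i) \geq \cE(i).
\]
Conversely, if $a_0 = a_n$ then every $c_k$ vanishes and $\sum_k c_k\cE'_k = 0$ fails to dominate the nonzero effective divisor $\cE(i)$. The main delicate step is purely bookkeeping: verifying the chain structure after iterated generic blowups, and applying the $\sigma_*D$ hypothesis to rule out extraneous contributions in the calculation of $D \cdot E'_k$.
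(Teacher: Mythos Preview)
Your proof is correct. For part~(a) you argue exactly as the paper does: the identity $\cE'_k = \sigma_k^*\cE'_{k-1} + E'_k$ on $Y_k$, pulled back to $Y_n$, yields $\cE'_k - \cE'_{k-1} = E'_k + \cdots + E'_n \geq 0$, and the chain of inequalities follows. For part~(b) you take a different route from the paper. The paper's one-line base case records the $\sigma$-relative numerical decomposition $D = \sigma^*\sigma_*D + (-D\cdot E'_1)\,E'_1$ and leaves the induction to the reader; unwinding that induction amounts to repeatedly peeling off the last blowup. You instead exploit the explicit $A_n$-chain intersection data on $Y_n$ all at once: with $d_k = a_k - a_{k-1}$ and $c_k = -D\cdot E'_k$, antinefness becomes the concavity conditions $d_k \geq d_{k+1}$ and $d_n \geq 0$, giving monotonicity of the $a_k$ immediately, and the telescoping identity $\sum_k c_k = d_1$ combined with~(a) handles the equivalence. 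Your approach is more explicit and entirely self-contained, at the cost of writing down the intersection matrix; the paper's approach is more structural but leaves the reader to supply the inductive step. Neither has a real advantage in strength---they are two presentations of the same linear algebra on the chain.
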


\begin{proof}
If $n = 1$, we have 
\[
\cE(i,x^{(i)},1) = \left( \sigma^{*}\cE_{i} + E(i,x^{(i)},1) \right) \geq \sigma^{*}\cE_{i} = \cE(i)
\]
\[
D = \sigma^{*}\sigma_{*}D + (-D\cdot E(i,x^{(i)},1)) \cE(i,x^{(i)},1).
\]
The general case of both statments follows easily by induction.
\end{proof}

\section{Main Theorem}

\subsection{Log Terminal Singularities and Multiplier Ideals}
Once more, suppose $x \in X$ is the unique closed point and $f:Y \to X$ is a projective birational morphism such that $Y$ is regular and $f^{-1}(x)$ is a simple normal crossing divisor.  Let $E_{1}, \ldots, E_{u}$ be the the irreducible components of $f^{-1}(x)$, and
let $K_{Y}$ be a canonical divisor on $Y$.  Then $K_{X} := f_{*}K_{Y}$ is a canonical divisor on $X$.  If we write the relative canonical divisor as
\[
K_{f} := K_{Y} - f^{*}K_{X} = \sum_{i} b_{i} E_{i}
\]
then $X$ has log terminal singularities if and only if $b_{i} > -1$ for all $i = 1, \ldots, u$.  In this case, $X$ is automatically $\Q$-factorial (see  Proposition 4.11 in \cite{KM}, as well as  \cite{deFernexHacon08} for recent developments).

\smallskip

If $\a \subseteq \O$ is an ideal, recall that $f:Y \to X$ as above is said to be a log resolution of $\a$ if $\a \O_{Y} = \O_{Y}(-G)$ for an effective divisor $G$ such that $\Ex(f) \cup \Supp(G)$ has simple normal crossings.  In this case, we can define the multiplier ideal of $(X, \a)$ with coefficient $\lambda \in \Q_{>0}$ as
\[
\J(X, \a^{\lambda}) = f_{*}\O_{Y}(\lceil K_{f} - \lambda G \rceil ) .
\]
See \cite{Tucker07-1} for an introduction in a similar setting, or \cite{Lazarsfeld04} for a more comprehensive overview.  Also recall that $\a$ is integrally closed if and only if
\[
\a = f_{*}\O_{Y}(-G).
\]

\subsection{Choosing $\a$ and $\lambda$}
We now begin the proof of Theorem~\ref{theorem}.  For the remainder, assume $X$ is log terminal, and let $I \subseteq \O_{X}$ be an integrally closed ideal.  In this section, we construct another ideal $\a \subseteq \O_{X}$ along with a coefficient $\lambda \in \Q_{>0}$, and in the following section it will be shown that $\J(X, \a^{\lambda}) = I$.  Let $f:Y \to X$ a log resolution of $I$ with exceptional divisors $E_{1}, \ldots, E_{u}$.  Suppose $I \O_{Y} = \O_{Y}(-F^{0})$, and write
\[
K_{f} = \sum_{i=1}^{u} b_{i} E_{i}
\]
\[
F^{0} = (f^{-1})_{*}f_{*}(F^{0}) + \sum_{i=1}^{u} a_{i}E_{i}.
\]

\smallskip

Choose $0 < \epsilon < 1/2$ such that $\lfloor \epsilon (f^{-1})_{*}f_{*}(F^{0} )\rfloor = 0$ and
\[
\epsilon (a_{i} + 1) < 1 + b_{i}
\]
for $i = 1, \ldots, u$.  Note that, since $X$ is log terminal, $1 + b_{i} > 0$ and any sufficiently small $\epsilon > 0$ will do.  Let $n_{i} := \lfloor \frac{1 + b_{i}}{\epsilon} - (a_{i} + 1) \rfloor \geq 0$, and $e_{i} := (-F^{0} \cdot E_{i})$.  Choose $e_{i}$ closed points $x_{1}^{(i)}, \ldots, x_{e_{i}}^{(i)}$ on $E_{i}$ such that $x_{j}^{(i)} \not\in \Supp \left( (f^{-1})_{*}f_{*}(F^{0}) \right)$ and $x_{j}^{(i)} \not\in E_{l}$ for $l \neq i$.  Denote by $g:Z \to Y$ the composition of $n_{i}$ generic blowups at each of the points $x_{j}^{(i)}$ for $j = 1, \ldots, e_{i}$ and $i = 1, \ldots, u$.  As in Section~\ref{blowups}, denote by $E(1), \ldots, E(u)$ the strict transforms of $E_{1}, \ldots, E_{u}$, and $E(i, x_{j}^{(i)}, 1), \ldots, E(i, x_{j}^{(i)}, n_{i})$ the strict transforms of the $n_{i}$ exceptional divisors over $x_{j}^{(i)}$.

\smallskip

Let $h := f \of g$, $F = g^{*}(F^{0})$, and choose an effective $h$-exceptional integral divisor $A$ on $Z$ such that $-A$ is $h$-ample.  It is easy to see that
\[
K_{g} = \sum_{i=1}^{u} \sum_{j=1}^{e_{i}} \sum_{k = 1}^{n_{i}} k \, E(i, x_{j}^{(i)}, k)
\]
and one checks
\[
K_{g} \cdot E(i) = e_{i} \qquad K_{g} \cdot E(i, x_{j}^{(i)}, k) = \left\{
\begin{array}{c@{\quad}l}
0 & k \neq n_{i} \\
-1 & k = n_{i}
\end{array}
\right. .
\]
It follows immediately that $F + K_{g}$ is $h$-antinef.
Choose $\mu > 0$ sufficiently small that 
\begin{equation}
\label{star}
\lfloor (1 + \epsilon) (F + K_{g} + \mu A) - K_{h} \rfloor =
\lfloor (1 + \epsilon)(F + K_{g}) - K_{h} \rfloor .
\end{equation}
As $-(F + K_{g} + \mu A)$ is $h$-ample, there exists $N > > 0$ such that $G:= N(F + K_{g} + \mu A)$ is integral and $-G$ is relatively globally generated.\footnote{As $X$ is log terminal, it also has rational singularities, and by Theorem 12.1 of \cite{Lipman69} it follows that $-(F + K_{g})$ is already globally generated without the addition of $-A$.  However, the above approach seems more elementary, and avoids unnecessary reference to these nontrivial results.}  In other words, $\a := h_{*} \O_{Z}(-G)$ is an integrally closed ideal such that $\a \O_{Z} = \O_{Z}(-G)$.  Set $\lambda = \frac{1 + \epsilon}{N}$.

\subsection{Conclusion of Proof}
Here, we will show $\J(X, \a^{\lambda}) = I = h_{*} \O_{Z}(-F)$.  Since
\[
\J(X, \a^{\lambda}) = h_{*} \O_{Z}(\lceil K_{h} - \lambda G \rceil) = h_{*} \O_{Z}( - \lfloor \lambda G - K_{h} \rfloor) ,
\]
by Lemma~\ref{sections}, it suffices to show
$
F' := \lfloor \lambda G - K_{h} \rfloor^{\sim} = F$.  In particular, we have reduced to showing a purely numerical statement.

\begin{lemma}
\label{chosenwell}
We have
$F' \leq F$ and $   h_{*} F' = h_{*} F  $.  In addition, for $i = 1, \ldots, u$ and $j= 1, \ldots, e_{i}$,
\[
 \ord_{E(i, x_{j}^{(i)}, n_{i})}(F') =
\ord_{E(i, x_{j}^{(i)}, n_{i})}(F) = \ord_{E(i)}(F).
\]
\end{lemma}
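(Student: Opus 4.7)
The plan is to reduce everything to a componentwise comparison on the prime divisors of $Z$. Since $\lambda N = 1 + \epsilon$, the choice of $\mu$ in $(\ref{star})$ gives $\lfloor \lambda G - K_h \rfloor = \lfloor D \rfloor$, where $D := (1+\epsilon)(F + K_g) - K_h$. Writing $K_h = K_g + g^*K_f$ simplifies this to $D = F + \epsilon(F + K_g) - g^*K_f$. A crucial preliminary observation is that $F = g^*F^0$ is already $h$-antinef: by the projection formula, $F \cdot E(i) = F^0 \cdot E_i \leq 0$ (the latter because $\sO_Y(-F^0) = I\sO_Y$ is relatively globally generated, making $F^0$ $f$-antinef), while $F \cdot E(i, x_j^{(i)}, k) = 0$ on $g$-exceptional divisors. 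By the minimality of the antinef closure, it therefore suffices to verify $\lfloor D \rfloor \leq F$ componentwise; this immediately yields $F' \leq F$.

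To run this comparison, I compute $\ord_E D$ and $\ord_E F$ on each prime $E$ of $Z$. Since each point $x_j^{(i)}$ was chosen away from $(f^{-1})_* f_*(F^0)$ and off every $E_l$ with $l \neq i$, a direct blowup-by-blowup multiplicity count shows $\ord_{E(i)}(g^*F^0) = \ord_{E(i, x_j^{(i)}, k)}(g^*F^0) = a_i$, while each prime in the strict transform of $(f^{-1})_* f_*(F^0)$ carries its $Y$-coefficient $c$ unchanged. Combining these with the given formula for $K_g$ and with $g^*K_f = \sum_i b_i\bigl(E(i) + \sum_{j,k} E(i, x_j^{(i)}, k)\bigr)$, I obtain
\[
\ord_{E(i)} D = (1+\epsilon)a_i - b_i, \qquad \ord_{E(i, x_j^{(i)}, k)} D = (1+\epsilon)a_i + k\epsilon - b_i,
\]
and $\ord_E D = (1+\epsilon)c$ on any non-exceptional prime $E$ with coefficient $c$ in $F$.

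The componentwise inequality $\lfloor D \rfloor \leq F$ now follows directly from the design of $\epsilon$ and $n_i$. The assumption $\lfloor \epsilon (f^{-1})_* f_*(F^0) \rfloor = 0$ gives $\epsilon c < 1$, hence $\lfloor (1+\epsilon)c \rfloor = c$. The condition $\epsilon(a_i + 1) < 1 + b_i$ forces $\ord_{E(i)} D < a_i + 1$. The definition $n_i = \lfloor (1+b_i)/\epsilon - (a_i+1) \rfloor$ yields $\epsilon(a_i + n_i + 1) \leq 1 + b_i$, so $\ord_{E(i, x_j^{(i)}, k)} D < a_i + 1$ for all $1 \leq k \leq n_i$. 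Together these prove $F' \leq F$. For the pushforward equality $h_*F' = h_*F$, note that the antinef closure construction of Lemma~\ref{sections} only adds $h$-exceptional divisors, so $F'$ and $\lfloor D \rfloor$ agree on each non-exceptional component, and there both have coefficient $c = \ord_E F$.

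Only the last assertion requires the full strength of $\epsilon < 1/2$. On the top-level divisor $E(i, x_j^{(i)}, n_i)$, the computation above reads $\ord D = a_i + \epsilon(a_i + n_i) - b_i$. The reverse floor inequality $\epsilon(a_i + n_i + 2) > 1 + b_i$ from the definition of $n_i$ then gives $\ord D > a_i + 1 - 2\epsilon > a_i$, so $\lfloor \ord D \rfloor = a_i$. The sandwich $\lfloor D \rfloor \leq F' \leq F$ pins $\ord_{E(i, x_j^{(i)}, n_i)} F'$ to the common value $a_i$, which equals both $\ord_{E(i, x_j^{(i)}, n_i)} F$ and $\ord_{E(i)} F$. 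The main obstacle throughout is calibrating $\epsilon$ and $n_i$ so that these estimates are simultaneously tight enough to pin the coefficient on $E(i, x_j^{(i)}, n_i)$, yet loose enough to keep $\lfloor D \rfloor \leq F$ on every other component; the margin $\epsilon < 1/2$ is precisely what permits both.
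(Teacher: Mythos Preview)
Your proof is correct and follows essentially the same approach as the paper: both reduce to comparing $\lfloor (1+\epsilon)(F+K_g) - K_h \rfloor$ with $F$ componentwise (using that $F$ is $h$-antinef), and both verify the key inequalities $\epsilon a_i - b_i < 1$, $0 \leq \epsilon(a_i + n_i) - b_i < 1$, and $\lfloor \epsilon c \rfloor = 0$ directly from the choices of $\epsilon$ and $n_i$. The only cosmetic difference is that the paper factors out $F$ first and works with $\lfloor \epsilon(F+K_g) - g^*K_f \rfloor$, whereas you keep $D$ intact; your explicit remark that $\epsilon < 1/2$ is used only for the lower bound at $k = n_i$ is a nice clarification of what the paper leaves implicit in the inequality $b_i/\epsilon - a_i \leq n_i$.
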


\begin{proof}
Since $F' = \lfloor \lambda G - K_{h} \rfloor^{\sim}$ and $F$ is $h$-antinef ($-F$ is relatively globally generated), it suffices to show these statements with $\lfloor \lambda G - K_{h} \rfloor$ in place of $F'$.
By \eqref{star}, we have
\begin{eqnarray*}
\lfloor \lambda G - K_{h} \rfloor & = & \lfloor (1 + \epsilon)(F + K_{g}) - K_{h} \rfloor \\
& = & F +  \lfloor \epsilon(F + K_{g}) - g^{*}K_{f} \rfloor.
\end{eqnarray*}
Since $\lfloor \epsilon (f^{-1})_{*} f_{*} F^{0} \rfloor = 0$, it follows immediately that $h_{*} \lfloor \lambda G - K_{h} \rfloor = h_{*}F$.  For the remaining two statements, consider the coefficients of $\epsilon(F+K_{g}) - g^{*}K_{f}$.  Along $E(i)$, we have $\epsilon a_{i} - b_{i}$, which is less than one by choice of $\epsilon$.  Along $E(i, x_{j}^{(i)}, k)$, we have $\epsilon(a_{i} + k) - b_{i}$.  This expression is greatest when $k = n_{i}$, where our choice of $n_{i}$ guarantees
\[
\frac{b_{i}}{\epsilon} - a_{i} \leq n_{i} < \frac{b_{i} + 1}{\epsilon} - a_{i}
\]
\[
0 \leq \epsilon(a_{i} + n_{i}) - b_{i} < 1 .
\]
It follows that $\lfloor \lambda G - K_{h} \rfloor \leq F$, with equality along $E(i, x_{j}^{(i)}, n_{i})$.  
\end{proof}

\begin{lemma}
For each $i = 1, \ldots, u$,
\[
(-F' \cdot E(i)) \cE(i) + \sum_{j=1}^{e_{i}}\sum_{k=1}^{n_{i}} (-F' \cdot E(i, x_{j}^{(i)}, k)) \cE(i, x_{j}^{(i)},k) \quad \geq \quad
(-F \cdot E(i))\cE(i).
\]
\end{lemma}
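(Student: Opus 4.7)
The plan is to apply Lemma~\ref{gen}(b) to the integral $h$-antinef divisor $F'$ with respect to each generic sequence of blowups over $x_j^{(i)}$, and split into two cases according to whether $F'$ and $F$ agree along $E(i)$.

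Set $a_i := \ord_{E(i)} F = \ord_{E_i} F^0$ and $a'_i := \ord_{E(i)} F'$. Since $F' \le F$ by Lemma~\ref{chosenwell}, we have $a'_i \le a_i$, and the same lemma gives $\ord_{E(i, x_j^{(i)}, n_i)} F' = a_i$ for each $j$. The points $x_j^{(i)}$ lie on $E_i$ and avoid every other component of $F^0$, so $g_*F' \le g_*F = F^0$ has $E_i$ as its unique component through $x_j^{(i)}$, verifying the hypothesis of Lemma~\ref{gen}(b) for each generic sequence (the sequences take place over disjoint points, so the statement is essentially local and the $\cE$'s computed on $Z$ agree with those computed on the intermediate blowup).

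Case 1 ($a'_i < a_i$): The forward implication of Lemma~\ref{gen}(b) gives, for each $j$,
\[
\sum_{k=1}^{n_i}(-F' \cdot E(i, x_j^{(i)}, k))\cE(i, x_j^{(i)}, k) \;\ge\; \cE(i).
\]
Summing over $j = 1, \ldots, e_i$ produces a lower bound of $e_i \cE(i) = (-F \cdot E(i))\cE(i)$, and since $(-F' \cdot E(i)) \cE(i)$ is a nonnegative multiple of an effective divisor ($F'$ is $h$-antinef), adding it preserves the inequality.

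Case 2 ($a'_i = a_i$): By the nondecreasing conclusion of Lemma~\ref{gen}(b), every intermediate order $\ord_{E(i, x_j^{(i)}, k)} F'$ also equals $a_i$, so $F - F'$ has vanishing coefficient along each divisor in $\Gamma_i := \{E(i)\} \cup \{E(i, x_j^{(i)}, k) : j,k\}$. Using $h_* F = h_* F'$, the divisor $F - F'$ is effective and $h$-exceptional with support disjoint from $\Gamma_i$. For each component $E$ in its support: either $E = E(l)$ with $l \ne i$, in which case $E \cdot E(i) \ge 0$ as distinct prime divisors, or $E = E(l, x_{j'}^{(l)}, k')$ with $l \ne i$, in which case $E \cdot E(i) = 0$ because the generic blowups over $x_{j'}^{(l)} \notin E_i$ produce divisors disjoint from $E(i)$. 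Hence $(F - F') \cdot E(i) \ge 0$, giving $-F' \cdot E(i) \ge e_i$, so $(-F' \cdot E(i)) \cE(i) \ge e_i \cE(i)$ already, and the remaining terms only improve the bound. The main obstacle is Case 2, where Lemma~\ref{gen}(b) supplies no useful information about the $\cE(i, x_j^{(i)}, k)$'s and one must instead extract the full $e_i$-multiple of $\cE(i)$ from $-F' \cdot E(i)$; the disjointness of blowup chains over distinct $x_{j'}^{(l)}$'s from $E(i)$ is exactly what makes this work.
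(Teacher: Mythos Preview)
Your proof is correct and follows the same two-case split as the paper, with the cases presented in the opposite order. In your Case~2 you do more work than necessary: once $\ord_{E(i)}F' = \ord_{E(i)}F$, the effective divisor $F - F'$ simply does not contain $E(i)$, so $(F-F')\cdot E(i)\ge 0$ immediately; there is no need to track the coefficients along the rest of $\Gamma_i$ or to analyze the blowup chains over the other $x_{j'}^{(l)}$.
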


\begin{proof}
If $\ord_{E(i)}F' = \ord_{E(i)}F$, as $F' \leq F$ we have $F' \cdot E(i) \leq F \cdot E(i)$ and the conclusion follows as $\cE(i)$ and $\cE(i, x_{j}^{(i)}, k )$ are effective and $F'$ is $h$-antinef.  Otherwise, if $\ord_{E(i)}F' < \ord_{E(i)}F = \ord_{E(i, x_{j}^{(i)}, n_{i})}F'$, then for each $j = 1, \ldots, e_{i}$ we saw in Lemma~\ref{gen}\eqref{genb} that
\[
\sum_{k=1}^{n_{i}} (-F' \cdot E(i, x_{j}^{(i)}, k)) \cE(i, x_{j}^{(i)},k) \quad \geq \quad \cE(i).
\]
Summing over all $j$ gives the desired conclusion.
\end{proof}

We now finish the proof by showing that $F' \geq F$.  Using the relative numerical decomposition \eqref{decomp} and the previous two Lemmas, we compute
\begin{eqnarray*}
F' & =&  h^{*} h_{*} F' + \sum_{i=1}^{u}(-F' \cdot E(i)) \cE(i) + \sum_{i=1}^{u}\sum_{j=1}^{e_{i}}\sum_{k=1}^{n_{i}} (-F' \cdot E(i, x_{j}^{(i)}, k)) \cE(i, x_{j}^{(i)},k) \\
&=& h^{*}(h_{*} F ) + \sum_{i=1}^{u}\left( (-F' \cdot E(i)) \cE(i) + \sum_{j=1}^{e_{i}}\sum_{k=1}^{n_{i}} (-F' \cdot E(i, x_{j}^{(i)}, k)) \cE(i, x_{j}^{(i)},k) \right) \\
& \geq &  h^{*} h_{*} F + \sum_{i=1}^{u} (-F \cdot E(i)) \cE(i) = F.
\end{eqnarray*}
This concludes the proof of Theorem~\ref{theorem}.

\bibliography{/Users/kevintucker/Desktop/TeX/math.bib}
\end{document}